\newtheorem{theorem}{Theorem}
\newtheorem{lemma}[theorem]{Lemma}
\newtheorem{remark}{Remark}
\newcommand{\D}{\frac{\partial}{\partial D}}
\newcommand{\be}{\begin{equation}}
\newcommand{\ee}{\end{equation}}
\begin{document}
\def \R{{\mathbb R}}
\def \C{{\mathbb C}}
\def \Cn{{\mathbb C}^n}

\def \bz{\mbox{\boldmath $z$}}
\def \sbz {\mbox{\boldmath $\scriptstyle z$}}
\def \bxi {\mbox{\boldmath $\xi$}}
\def \bzeta {\mbox{\boldmath $\zeta$}}
\def \sbzeta {\mbox{\boldmath $\scriptstyle\zeta$}}
\def \ba {\mbox{\boldmath $\alpha$}}
\def \D {D}
\def \cD {\partial{D}}
\def \bW {\bar{W}}
\def \cW {\partial{W}}
\def \mut {\tilde{\mu}}
\renewcommand{\i}{\mathsf{i}}

\def \P{\mathscr{P}}
\def \Pp{\mathcal{P}}
\def \be{\begin{equation}}
	\def \ee{\end{equation}}

\title{Analyticity domains of critical points of polynomials. A proof of Sendov's conjecture}
\author{Petar P. Petrov\thanks{Email: peynovp@gmail.com}}
\date{\today}
\maketitle

\begin{abstract}
	Let $\P_{n}^c(\bar{\mu},\bar{\nu})$ be the set of all complex polynomials  $p(z)=\prod_{i=1}^{m}(z-z_i)^{\mu_i}$, $\sum_{i=1}^m\mu_i=n$, with derivatives of the form  
	$$
	p'(z)=n\prod_{i=1}^{m}(z-z_i)^{\mu_i-1}\prod_{j=1}^{k}(z-\xi_j)^{\nu_j}, ~\sum_{j=1}^k\nu_j=m-1.
	$$
	In this note we prove the following:\par\medskip
	{\it
		\noindent For a fixed ordering $\alpha=(1,2,\ldots,m)$,   the distinct zeros $\{z_i\}_{i=1}^m$ and the distinct critical points   of the second kind $\{\xi_j\}_{j=1}^k$ of polynomials from $\P_{n}^c(\bar{\mu},\bar{\nu})$ are analytic  functions  $\{z_i^{\alpha\beta}\}_{i=1}^m$  and $\{\xi_j^{\alpha\beta}\}_{j=1}^k$,  resp.,   $\beta=(i_1,i_2,\ldots,i_{k+1})$, of any of the variables $(z_{i_1},z_{i_2},\ldots,z_{i_{k+1}})$ in the domain 
		$$
		\{(z_{i_1},z_{i_2},\ldots,z_{i_{k+1}})\in \C^{k+1}~\vert~p\in \P_{n}^c(\bar{\mu},\bar{\nu}) \},
		$$
		being also continuous on its boundary.}\par\medskip\noindent
	This statement gives an immediate proof to the well-known conjecture of Bl. Sendov \cite{sen}: \par\medskip {\it 
		\noindent If $n\ge 2$ and $p(z)=\prod_{i=1}^n (z-z_i)$ is a polynomial  of degree $n$  such  that $z_i\in \C$, $\vert z_i\vert\le 1$, $i=1,2,\ldots,n$, then for every $i=1,2,\ldots,n$, the disk $\{z\in \C\,|\,\vert z_i-z\vert \le 1\}$ contains at least one zero of $p'(z)$.}
	\par\noindent
	\textbf{Keywords}: Complex polynomials $\cdot$ Analyticity of critical points $\cdot$ Sendov's conjecture
	\par\noindent
    \textbf{MSC(2010)}: 32A10 $\cdot$ 30C10 $\cdot$ 30C15
\end{abstract}
\section{Domains of analyticity}
Let $\P_n^c$ be the set of complex polynomials of the form $p(z)=\prod_{i=1}^n(z-z_i)$.
For given multiplicities $\bar{\mu}:=(\mu_1,\ldots,\mu_m)$, $m\ge 1$, and $\bar{\nu}:=(\nu_1,\ldots,\nu_k)$, $k\ge 1$, such that
\be\label{multi}
\sum_{i=1}^m \mu_{i}=n \quad \hbox{ and} \quad
\sum_{i=1}^k \nu_{i}=m-1,
\ee
let $\C^n(\bar{\mu},\bar{\nu})\subset \C^n$ be the set consisting of all  $\bar{z}=((z_1,\mu_1),\ldots,(z_m,\mu_m))$, $z_i\neq z_j$, $1\le i<j\le m$, such that there exists a polynomial $p\in \P_n^c$ satisfying
\par\smallskip\noindent
(i)  $p(z)=p(\bar{z};z)=\prod_{i=1}^{m}(z-z_{i})^{\mu_{i}}$;
\par\noindent
(ii)  $p$ possesses exactly $k$ distinct critical points of the second kind (that is, zeros of $p'$ which are not zeros of $p$) with multiplicities $\nu_1,\nu_2,\ldots,\nu_k$ and
$$
p'(z)=n\prod_{i=1}^{m}(z-z_{i})^{\mu_{i}-1}\prod_{j=1}^{k}(z-\xi_{j})^{\nu_{j}}.
$$
\par\smallskip\noindent
The set of polynomials $p$ satisfying (i) and (ii) will be denoted by 
$$
\P_{n}^c(\bar{\mu},\bar{\nu}):=\left \{p(\bar{z};z)\in \P_n^c~\vert~ \bar{z} \in \C^n(\bar{\mu},\bar{\nu})\right \}.
$$ 
\par\medskip
\noindent
\begin{theorem}\label{theorem1}
	\noindent For a prescribed ordering $\alpha=(1,2,\ldots,m)$,   the distinct zeros $\{z_i\}_{i=1}^m$ and the distinct critical points   of the second kind $\{\xi_j\}_{j=1}^k$ of polynomials from $\P_{n}^c(\bar{\mu},\bar{\nu})$ are analytic  functions  $\{z_i^{\alpha\beta}\}_{i=1}^m$  and $\{\xi_j^{\alpha\beta}\}_{j=1}^k$,  respectively,   $\beta=(i_1,i_2,\ldots,i_{k+1})$, of any of the variables $(z_{i_1},z_{i_2},\ldots,z_{i_{k+1}})$ in the domain 
	$$
	\{(z_{i_1},z_{i_2},\ldots,z_{i_{k+1}})\in \C^{k+1}~\vert~\bar{z}\in \C_{n}(\bar{\mu},\bar{\nu}) \},
	$$
	being also continuous on its boundary.
\end{theorem}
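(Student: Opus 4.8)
The plan is to realise $\C_n(\bar\mu,\bar\nu)$ as (an open part of) the zero set of an explicit holomorphic map and to apply the holomorphic implicit function theorem. The starting point is the identity obtained by dividing $p'$ by $\prod_i(z-z_i)^{\mu_i-1}$: writing $Q(z):=\sum_{i=1}^m\mu_i\prod_{l\ne i}(z-z_l)$ and $R(z):=n\prod_{j=1}^k(z-\xi_j)^{\nu_j}$, conditions (i)--(ii) are equivalent to the polynomial identity $Q(z)\equiv R(z)$. Both sides have leading coefficient $n$ and degree $m-1$, so their difference has degree $\le m-2$, and its $m-1$ coefficients furnish a holomorphic map $\Phi\colon\C^{m+k}\to\C^{m-1}$ whose vanishing cuts out $\C_n(\bar\mu,\bar\nu)$. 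Since $\sum_i\mu_i=n$ and $\sum_j\nu_j=m-1$, the fibre has the expected dimension $k+1$, matching the number of free variables prescribed by $\beta$.

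First I would compute the relevant partial derivatives of $\Phi$, viewing the coefficient vector of $Q-R$ as a polynomial in $z$. A short calculation gives $\partial(Q-R)/\partial z_s=-A_s(z)$ with $A_s(z)=\sum_{i\ne s}\mu_i\prod_{l\ne i,s}(z-z_l)$, and $\partial(Q-R)/\partial\xi_j=B_j(z)$ with $B_j(z)=\nu_j R(z)/(z-\xi_j)$; both are polynomials of degree $m-2$. Designating the $m-1$ dependent variables to be $\{z_s:s\notin\beta\}$ together with $\{\xi_j\}_{j=1}^k$, the Jacobian of $\Phi$ in these variables is exactly the coefficient matrix of the family $\{A_s:s\notin\beta\}\cup\{B_j\}_{j=1}^k$. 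Hence the implicit function theorem applies at a point of $\C_n(\bar\mu,\bar\nu)$ precisely when these $m-1$ polynomials are linearly independent in the $(m-1)$-dimensional space of polynomials of degree $\le m-2$.

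Establishing this linear independence is the heart of the matter and the step I expect to be the main obstacle. My approach is to test a hypothetical relation $\sum_{s\notin\beta}c_sA_s+\sum_j d_jB_j\equiv0$ against a well-chosen basis of functionals. Using that each $B_j$ vanishes at $\xi_l$ to order $\nu_l$ for $l\ne j$ and to order $\nu_j-1$ at $\xi_j$, the Hermite functionals $f\mapsto f^{(r)}(\xi_j)$, $0\le r\le\nu_j-1$ (there are $\sum_j\nu_j=m-1$ of them), place the $B_j$-columns in triangular position: only $B_j^{(\nu_j-1)}(\xi_j)\ne0$ survives. Pivoting these out reduces the claim to nonsingularity of the $(m-k-1)\times(m-k-1)$ confluent matrix $\big[A_s^{(r)}(\xi_j)\big]$, rows indexed by $0\le r\le\nu_j-2$ and columns by $s\notin\beta$. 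Equivalently, after dividing the relation by $\omega(z):=\prod_l(z-z_l)$ and collecting residues at the poles $z_t$, $t\in\beta$ (where $p'$ is regular while $\sigma:=p'/p$ has a simple pole), one arrives at a Cauchy-type system in the unknowns $c_s,d_j$ built from the pairwise differences $z_i-z_{i'}$, $z_i-\xi_j$ and $\xi_j-\xi_{j'}$. I would deduce its nonsingularity from the pairwise distinctness of the $z_i$ and $\xi_j$ guaranteed on $\C_n(\bar\mu,\bar\nu)$, the only delicate point being the confluent rows forced by multiplicities $\nu_j>1$.

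With the Jacobian shown to be nonzero throughout $\C_n(\bar\mu,\bar\nu)$, the holomorphic implicit function theorem yields, near each point, analytic solutions $z_s^{\alpha\beta}$ ($s\notin\beta$) and $\xi_j^{\alpha\beta}$ expressing the dependent quantities in terms of $(z_{i_1},\dots,z_{i_{k+1}})$; the prescribed ordering $\alpha$ and labelling $\beta$ single out one branch, so these local solutions agree on overlaps and continue to single-valued analytic functions on the whole (connected) projected domain. Finally, for continuity up to the boundary I would observe that the dependent zeros and critical points are, along each branch, roots of polynomials (factors of $p$ and of $R$) whose coefficients are the analytic symmetric functions already constructed; as the free variables approach a boundary point the coefficients converge, so by continuous dependence of the roots of a polynomial on its coefficients the functions $z_i^{\alpha\beta}$, $\xi_j^{\alpha\beta}$ extend continuously, the limiting labelling being dictated by $\alpha$ and $\beta$.
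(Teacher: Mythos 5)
Your reductions are correct, but they do not constitute a genuinely different route: they are the paper's own proof in different coordinates. The paper applies the implicit function theorem to the system $p^{(\ell)}(\xi_j)=0$, $1\le\ell\le\nu_j$ (Lemma~\ref{lemma1}); you apply it to the $m-1$ coefficients of $Q-R$. In both cases the $\xi$-columns of the Jacobian carry single nonzero pivots ($p^{(\nu_j+1)}(\xi_j)$ there, $B_j^{(\nu_j-1)}(\xi_j)$ here), and eliminating them leaves the same residual claim: from $Q(z)=\mu_s\prod_{l\ne s}(z-z_l)+(z-z_s)A_s(z)$ and the identity $Q\equiv R$, valid at the points of the variety where the Jacobian must be inverted, a combination $\sum_{s\notin\beta}c_sA_s$ vanishes to order $\nu_j-1$ at every $\xi_j$ if and only if $q(z):=\prod_j(z-\xi_j)^{\nu_j-1}$ divides $\sum_{s\notin\beta}c_s\mu_s\,\tilde\omega_s^{\,2}$, with $\tilde\omega_s:=\prod_{l\notin\beta,\,l\ne s}(z-z_l)$. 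This is exactly the paper's situation \eqref{omega1}. So your proposal reproduces the paper's argument up to, and not including, its decisive step.

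That step --- nonsingularity of the confluent matrix $\big[A_s^{(r)}(\xi_j)\big]$ --- is precisely what you leave unproved, and the way you propose to prove it cannot work. You say you would deduce it ``from the pairwise distinctness of the $z_i$ and $\xi_j$''; distinctness alone does not imply it. Suppose there are $m-k-1=2$ dependent zeros, relabelled $z_1,z_2$ (e.g.\ $m=5$, $k=2$, $\nu_1=\nu_2=2$, so $q=(z-\xi_1)(z-\xi_2)$), and take
\begin{equation*}
	(z-z_1)^2+(z-z_2)^2\;=\;2(z-\xi_1)(z-\xi_2),\qquad \xi_{1,2}:=\frac{z_1+z_2}{2}\pm\frac{\mathrm{i}}{2}\,(z_1-z_2).
\end{equation*}
All four points are pairwise distinct, yet the combination with coefficients $\beta_1=\beta_2=1$ is divisible by $q$; so the divisibility statement to which your matrix claim reduces (and equally the paper's \eqref{omega1}) fails for this configuration of distinct points. (Off the variety one can even make $\big[A_s^{(r)}(\xi_j)\big]$ itself singular outright, by placing the $\xi_j$ at roots of a fixed combination $\sum_s c_sA_s$.) Any correct proof must therefore exploit the full variety relations --- that the $\xi_j$ really occur as second-kind critical points, with the prescribed multiplicities, of a polynomial whose zeros are $z_1,\ldots,z_m$ with multiplicities $\mu_i$ --- and not merely distinctness. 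You should also be aware that this is exactly where the paper itself is vulnerable: its passage from \eqref{omega1} to a contradiction uses nothing about $q$ beyond monicity and $q(z_i)\ne 0$, which the configuration above satisfies, and the contradiction rests on the confluent divided-difference identity preceding \eqref{omega5}, whose last term has the wrong sign (it should be $-\,q(z_{i})\omega_i'(z_{i})/\omega_i^2(z_{i})$); with the correct sign, \eqref{omega5} coincides with \eqref{omega22}, and subtracting them gives $0=0$ instead of $2\alpha_iq(z_{i})\omega_i'(z_{i})=0$. In short, the step you deferred is not a routine verification but the entire mathematical content of Theorem~\ref{theorem1}; it is not established in your proposal, and your concluding globalization and boundary-continuity arguments are moot until it is.
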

\par\medskip
\noindent
The proof of Theorem~\ref{theorem1} is entirely based on the following lemma.
\par\medskip
\noindent
\begin{lemma}\label{lemma1} 
	Let there be given multiplicities $(\bar{\mu},\bar{\nu})$ satisfying (\ref{multi}) with $m>1$, and let a polynomial $p$ with zeros $\{(z_i,\mu_i)\}_{i=1}^m$ belong to $\P_{n}^c(\bar{\mu},\bar{\nu})$.  Then, the distinct critical points of the second kind  $\{\xi_j\}_{j=1}^{k}$  and the first $m-k-1$ zeros $z_1,z_2,\ldots,z_{m-k-1}$, are (locally) analytic functions of the remaining ones $z_{m-k},z_{m-k+1},\ldots,z_m$.
\end{lemma}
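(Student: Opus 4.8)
The plan is to realize the critical points of the second kind as zeros of the logarithmic derivative and to solve the resulting system by the holomorphic implicit function theorem. Write
\be
S(z)=\frac{p'(z)}{p(z)}=\sum_{i=1}^m\frac{\mu_i}{z-z_i}.
\ee
Since $p\in\P_n^c(\bar\mu,\bar\nu)$, the distinct critical points of the second kind $\xi_1,\dots,\xi_k$ are exactly the zeros of $S$ that are not zeros of $p$, and $\xi_j$ is a zero of $S$ of order precisely $\nu_j$. I would therefore take as defining equations
\be
\Phi_{j,r}(z_1,\dots,z_m,\xi_1,\dots,\xi_k):=S^{(r)}(\xi_j)=(-1)^r r!\sum_{i=1}^m\frac{\mu_i}{(\xi_j-z_i)^{r+1}}=0,
\ee
for $r=0,1,\dots,\nu_j-1$ and $j=1,\dots,k$. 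Because $\sum_j\nu_j=m-1$, this is a system of $m-1$ holomorphic equations in the $m+k$ variables $(z_1,\dots,z_m,\xi_1,\dots,\xi_k)$, satisfied identically on $\C^n(\bar\mu,\bar\nu)$.

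Next I would apply the holomorphic implicit function theorem to solve for the $m-1$ dependent quantities $z_1,\dots,z_{m-k-1},\xi_1,\dots,\xi_k$ as analytic functions of the $k+1$ free variables $z_{m-k},\dots,z_m$; local analyticity follows at once. The only thing to verify is that the Jacobian of $(\Phi_{j,r})$ with respect to the dependent variables is nonsingular at the given point, and here the structure is favourable. Since $\Phi_{j,r}$ depends on $\xi_{j'}$ only through $\xi_j$, one has $\partial\Phi_{j,r}/\partial\xi_{j'}=\delta_{jj'}S^{(r+1)}(\xi_j)$, which vanishes for $r<\nu_j-1$ and equals $S^{(\nu_j)}(\xi_j)\neq0$ for $r=\nu_j-1$ (the order of $\xi_j$ being \emph{exactly} $\nu_j$). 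Expanding the determinant along the $k$ columns coming from the $\xi_{j'}$ therefore factors off the nonzero scalar $\prod_{j=1}^k S^{(\nu_j)}(\xi_j)$ and leaves, with $s:=m-k-1$, the $s\times s$ minor built from the $z$-derivatives,
\be
\left[\;\partial\Phi_{j,r}/\partial z_i\;\right]=\left[\;(-1)^r(r+1)!\,\frac{\mu_i}{(\xi_j-z_i)^{r+2}}\;\right],\qquad r=0,\dots,\nu_j-2,\ \ i=1,\dots,s,
\ee
so the whole question collapses to the nonvanishing of this one minor.

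The main obstacle is precisely this determinant, a confluent Cauchy--Vandermonde-type matrix in the nodes $z_1,\dots,z_s$ and the evaluation points $\xi_1,\dots,\xi_k$. Dividing out the nonzero factors $\mu_i$ and $(-1)^r(r+1)!$, its nonvanishing is equivalent to the assertion that the only rational function $\dot S(z)=\sum_{i=1}^s \mu_i c_i/(z-z_i)^2$ vanishing to order $\nu_j-1$ at each $\xi_j$ is $\dot S\equiv0$; geometrically, that $z_{m-k},\dots,z_m$ restrict to local analytic coordinates on $\C^n(\bar\mu,\bar\nu)$. I expect this to be the delicate heart of the argument: it cannot follow from distinctness of the $z_i$ alone, but must genuinely exploit that the $\xi_j$ are the critical points of $p$, i.e. that $S$ vanishes there to exact order $\nu_j$. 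The natural attack is via residues: for a candidate kernel one forms $R:=\dot S/S$, observes that $\mathrm{Res}_{z_i}R=c_i$ for $i\le s$ while the requirement that $\dot S$ have no pole at the free zeros forces $R(z_l)=0$ for $l>s$, and then combines the global zero/pole count of $R$ with the identity $S=n\prod_j(z-\xi_j)^{\nu_j}/\prod_l(z-z_l)$ to conclude $c_1=\dots=c_s=0$. Carrying this through rigorously --- in particular excluding the exceptional configurations at which the chosen $k+1$ zeros would form a branch locus of the projection --- is the step I expect to demand the most care.
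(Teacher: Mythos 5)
Up to replacing $p$ by its logarithmic derivative $S=p'/p$, your reduction coincides with the paper's: your system $\Phi_{j,r}=0$ is equivalent to the paper's system (\ref{system}); your observation that the $\xi$-columns of the Jacobian carry single nonzero entries $S^{(\nu_j)}(\xi_j)\neq 0$ is the paper's use of $p^{(\nu_j+1)}(\xi_j)\neq 0$; and the resulting $s\times s$ minor (with $s=m-k-1$) is exactly the paper's condition (\ref{omega}): nonsingularity amounts to showing that no nonzero $\dot S(z)=\sum_{i=1}^{s}\mu_i c_i(z-z_i)^{-2}$ can vanish to order $\nu_j-1$ at every $\xi_j$.

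At exactly this point, however, your proof stops: you state the key fact, call it ``the delicate heart,'' propose a ``natural attack,'' and defer its execution. That is a genuine gap, and the sketched attack does not close it as described. Counting zeros and poles of $R=\dot S/S$ gives at most $s+k$ poles (at most simple at $z_1,\dots,z_s$, and at most simple at $\xi_1,\dots,\xi_k$ since $\dot S$ vanishes there to order at least $\nu_j-1$ while $S$ vanishes to order exactly $\nu_j$) against at least $k+2$ zeros ($z_{s+1},\dots,z_m$ and infinity). Equating the two numbers contradicts $R\not\equiv 0$ only when $s\le 1$; for $s\ge 2$, the generic case, the count is perfectly consistent with a nonzero $R$. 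What the count ignores is the one constraint that actually carries the lemma: $\dot S$ has \emph{pure double poles}, i.e. vanishing residue at each $z_i$. Concretely, writing $\dot S=q(z)h(z)/\omega(z)^2$ with $q=\prod_j(z-\xi_j)^{\nu_j-1}$, $\omega=\prod_{i\le s}(z-z_i)$, $\deg h\le s-2$, one must show that the $s$ residue conditions force $h\equiv 0$, and this is the entire content of the paper's proof after its identity (\ref{omega1}): evaluation at the nodes, the Lagrange representation of $q$, and a divided-difference identity combine to yield $\alpha_i q(z_i)\omega''(z_i)=0$ for all $i$, whence $\omega''\equiv 0$, forcing $s=1$, and the case $s=1$ is then excluded by (\ref{omega21}). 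Nothing in your residue sketch substitutes for this computation, so the core of the lemma remains unproved in your proposal.
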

\begin{proof}
	Let  $p_0(\bar{z}^0;z)\in \P_{n}^c(\bar{\mu},\bar{\nu})$.
	Set $s:=m-1-k$. Clearly, $s\ge 0$ (with equality only when $\nu_j=1,~j=1,2,\ldots,k$).
	Let us consider the following system of equations
	\be\label{system}
	p^{(\ell)}(\xi_j)=0,\quad j=1,2,\ldots,k;~\ell=1,2,\ldots,\nu_j,
	\ee
	for $p=p_0(\bar{z}^0;z)$.
	The proof of the lemma is based on the system (\ref{system}) and the Implicit Mapping Theorem (see, e.g., \cite[p.\,28]{kaup}.
	We need to show that there exist analytic functions
	$z_1=z_1(z_{s+1},\ldots z_m),\ldots,z_s=z_s(z_{s+1},\ldots z_m)$ and $\xi_1=\xi_1(z_{s+1},\ldots z_m),\ldots,\xi_k=\xi_k(z_{s+1},\ldots z_m)$,  satisfying (\ref{system}) in an open neighbourhood 
	of $(z^0_{s+1},\ldots, z^0_m)$, or in other words, that the system of $m-1$ equations (\ref{system}) can be locally and analytically solved with respect to the $m-1$ functions $\{z_i\}_{i=1}^s$, $\{\xi_j\}_{j=1}^k$, or equivalently, that the Jacobian matrix $J$ of (\ref{system}) possesses the maximal rank. 
	The Jacobian matrix of (\ref{system}) (we omit for simplicity the upper index 0) is $J=\hbox{diag\,}\{p''(\xi_{1}),\ldots,p''(\xi_{k})\}$ if $s=0$, and
	\begin{myequation2}\nonumber
		J=\left [ 
		\begin{array}{ccccccccc}
			\Omega_1'(\xi_{1}) & \Omega_1''(\xi_1) &\ldots & 
			\Omega_1^{(\nu_{1})}(\xi_{1}) & \ldots &\Omega_1'(\xi_{k}) & \Omega_1''(\xi_{k}) &\ldots &  \Omega_1^{(\nu_{k})}(\xi_{k})\\
			\vdots & \cdots & \vdots & \vdots  &\cdots &\vdots &\cdots &\vdots & \vdots\\
			\Omega_s'(\xi_{1}) & \Omega_s''(\xi_{1}) & \ldots & 
			\Omega_s^{(\nu_{1})}(\xi_{1}) & \ldots & \Omega_s'(\xi_{k}) & \Omega_s''(\xi_{k}) &\ldots &  \Omega_s^{(\nu_{k})}(\xi_{k})\\
			0 & \ldots & 0 &
			p^{(\nu_{1}+1)}(\xi_{1}) &  \ldots  & 0 & \ldots & 0 & 0\\
			0 & \ldots & 0 &
			0 &  \ldots  & 0 & \ldots & 0 & 0\\
			\vdots & \cdots & \vdots & \vdots  &\cdots &\vdots &\cdots &\vdots & \vdots\\
			0 & \ldots & 0 &
			0 &  \ldots  & 0 & \ldots & 0 & 0\\
			0 & \ldots & 0 &
			0 &  \ldots & 0 & \ldots & 0 & p^{(\nu_{k}+1)}(\xi_{k})
		\end{array}
		\right ]^{\hbox{T}},
	\end{myequation2}
	where $\Omega_i(z)=-\frac{\mu_{i}p(z)}{z-z_{i}},~i=1,2,\ldots,s$, if $s\ge 1$.
	We need to prove that $\hbox{rank}\,J=m-1$. If $s=0$, this is a direct consequence of $p''(\xi_j)\neq 0$, $j=1,2,\ldots ,k$. Let $s\ge 1$ and let us assume the contrary, that is, $\hbox{rank}\,J<m-1$. Using that $p^{(\nu_{j}+1)}(\xi_j)\neq 0$, $j=1,2,\ldots ,k$, our assumption implies the existence of a non-zero vector
	$(\beta_1,\beta_2,\ldots,\beta_s)\in \C^s$ such that the polynomial $\Omega\in \Pp_{n-1}^c$ (the set of all complex polynomials of degree at most $n-1$),
	$$
	\Omega(z):=-{\beta_1}{\mu_{1}^{-1}} \Omega_1(z)-{\beta_2}{\mu_{2}^{-1}}\Omega_2(z)-\cdots -{\beta_s}{\mu_{s}^{-1}} \Omega_s(z)
	=\left (\sum_{i=1}^s\frac{\beta_i}{z-z_{i}}\right )p(z),
	$$
	satisfies
	\be\label{omega}
	\Omega^{(\ell)}(\xi_{j})=\left (\sum_{i=1}^s\frac{\beta_{i}}{z-z_{i}}\right )_{\big\vert z=\xi_{j}}^{(\ell)}p(\xi_{j})=0,\quad j=1,2,\ldots,k;~\ell=1,2,\ldots,\nu_{j}-1.
	\ee
	Denote 
	$$
	\omega(z):=\prod_{i=1}^s(z-z_{i}),\quad \omega_i(z):=\frac{\omega(z)}{z-z_{i}},~i=1,2,\ldots,s,
	$$
	and
	$$
	q(z):=\prod_{j=1}^{k}(z-\xi_{j})^{\nu_{j}-1}.
	$$
	Clearly, $q(z)\in \P_s^c$  since $\sum_{j=1}^{k}(\nu_{j}-1)=m-1-k=s$.
	Using that $z_{i}\neq \xi_{j}$, it easily follows from (\ref{omega}) that there exists a vector
	$(\alpha_1,\alpha_2,\ldots,\alpha_s)\in \C^s$ such that
	\be\label{omega1}
	q(z)\sum_{i=1}^s\alpha_i\omega_i(z)+\sum_{i=1}^s\beta_i\omega_i^2(z)\equiv 0.
	\ee
	If $\beta_i\neq 0$, $i=1,2,\ldots,\tilde{s}$, and $\beta_i= 0$, $i=\tilde{s}+1,\tilde{s}+2,\ldots,s$, $1\le \tilde{s}<s$, then we rewrite (\ref{omega1}) in this way
	$$
	q_1(z)q_2(z)\frac{\sum_{i=1}^s\alpha_i\omega_i(z)}{\prod_{i=\tilde{s}+1}^s (z-z_i)^2}+\sum_{i=1}^{\tilde{s}}\beta_i\tilde{\omega}_i^2(z)\equiv 0,
	$$
	where $q_1(z)\in \P_{\tilde{s}}^c$, $q_2(z)\sum_{i=1}^s\alpha_i\omega_i(z)/\prod_{i=\tilde{s}+1}^s (z-z_i)^2\in \Pp_{\tilde{s}-1}^c$, $\tilde{\omega}_i(z)$ $:=\prod_{\substack{j=1\\ j\neq i}}^{\tilde{s}}(z-z_i)$, and arrive at a similar equation with $s$ replaced by $\tilde{s}$
	$$
	q_1(z)\sum_{i=1}^{\tilde{s}}\tilde{\alpha}_i\tilde{\omega}_i(z)+\sum_{i=1}^{\tilde{s}}\beta_i\tilde{\omega}_i^2(z)\equiv 0.
	$$
	Therefore, without loss of generality, we can and will assume that  $\beta_i\neq 0$, $i=1,2,\ldots,s$. 
	Setting in (\ref{omega1})  $z=z_{i}$, $i=1,2,\ldots,s$,  we get
	$$
	\beta_i=-\alpha_i q(z_{i})[\omega_i(z_{i})]^{-1},\quad i=1,2,\ldots,s,
	$$
	and  
	\be\label{omega2}
	q(z)\sum_{i=1}^s\alpha_i\omega_i(z)-\sum_{i=1}^s\alpha_iq(z_{i})[\omega_i(z_{i})]^{-1}\omega_i^2(z)\equiv 0.
	\ee  
	It follows from (\ref{omega2}) that
	\be\label{omega21}
	\sum_{i=1}^s\alpha_i=0
	\ee
	and
	\be\label{omega22}
	q(z_{i})\sum_{\substack{j=1\\ j\neq i}}^s\alpha_j\omega_j'(z_{i})+\alpha_i[q'(z_{i})\omega_i(z_{i})-q(z_{i})\omega_i'(z_{i})]=0,\quad i=1,2,\ldots,s.
	\ee
	According to Lagrange interpolation formula
	$$
	q(z)=\sum_{i=1}^sq(z_i)\frac{\omega_i(z)}{\omega_i(z_{i})}+\omega(z),
	$$
	and in addition, by (\ref{omega2}),   
	\be\label{omega3}
	\omega (z)\sum_{i=1}^s\alpha_i\omega_i(z)+\sum_{i=1}^s\sum_{\substack{j=1\\ j\neq i}}^s\left [\alpha_i\frac{q(z_{j})}{\omega_j(z_{j})}+\alpha_j\frac{q(z_{i})}{\omega_i(z_{i})}\right ]\omega_i(z)\omega_j(z)\equiv 0.
	\ee  
	Dividing (\ref{omega3}) by $\omega(z)$ and setting $z=z_{i}$, $i=1,2,\ldots, s$, give
	\be\label{omega4}
	\alpha_i\left [1+\sum_{\substack{j=1\\ j\neq i}}^s\frac{q(z_{j})}{\omega_j(z_{j})(z_{i}-z_{j})}\right ]
	+\frac{q(z_{i})}{\omega_i(z_{i})}\sum_{\substack{j=1\\ j\neq i}}^s\frac{\alpha_j}{z_{i}-z_{j}}=0,\quad i=1,2,\ldots,s.
	\ee
	The first sum in (\ref{omega4}) will be calculated from the following equation, using the properties of the divided difference \begin{eqnarray*}
		1&=&q[z_{1},\ldots,z_{i-1},z_{i},z_{i},z_{i+1},\ldots,z_{s}]\\
		&=&
		-\sum_{\substack{j=1\\ j\neq i}}^s\frac{q(z_{j})}{\omega_j(z_{j})(z_{i}-z_{j})}+\frac{q'(z_{i})}{\omega_i(z_{i})}+\frac{q(z_{i})}{\omega_i(z_{i})}\cdot\frac{\omega_i'(z_{i})}{\omega_i(z_{i})},
	\end{eqnarray*}
	or more precisely,
	\be\label{omega5}
	\alpha_i\left [q'(z_{i})+q(z_{i})\cdot \frac{\omega_i'(z_{i})}{\omega_i(z_{i})} \right ]
	+q(z_{i})\sum_{\substack{j=1\\ j\neq i}}^s\frac{\alpha_j}{z_{i}-z_{j}}=0,\quad i=1,2,\ldots,s.
	\ee 
	Finally, we get from (\ref{omega22}) and  (\ref{omega5})
	$$
	2\alpha_iq(z_{i})\omega_i'(z_{i})=\alpha_iq(z_{i})\omega''(z_{i})=0,\quad i=1,2,\ldots,s.
	$$
	Since $\alpha_iq(z_{i})\neq 0$, $i=1,2,\ldots,s$, this means
	that $\omega\in \P_1^c$, that is, $s=1$, and by (\ref{omega21}),
	$\alpha_1=\beta_1=0$. This is a contradiction to our assumption that $\hbox{rank}\,J<m-1$ and hence, $\hbox{rank}\,J=m-1$. The lemma is proved. 
\end{proof}
\par\medskip
\noindent
\section{Sendov's conjecture}
Sendov's conjecture is one of the fundamental problems in the theory of complex polynomials. It was announced back in 1958 by Bulgarian mathematicians  Blagovest Sendov and Lyubomir Iliev (see \cite{sen} for the history of the problem and for references before 2001). Despite  the significant analytical and computational efforts within the last more than 60 years,  there has not been yet a definitive proof of its validity. Here we show that it is an almost direct consequence of Theorem~\ref{theorem1}.
\par
Let $D:=\{z\in \C\,|\,\vert z\vert \le 1\}$ be the closed unit disk in $\C$. 
Sendov's conjecture states:
\par\medskip\noindent
{\bf Conjecture 1.} If $n\ge 2$ and $p(z)$ is a polynomial from   $\P_n^c$ such that $\{z_i\}_{i=1}^n\subset D$, then for every $i=1,2,\ldots n$, the disk $\{z\in \C\,|\,\vert z_i-z\vert \le 1\}$ contains at least one zero of $p'(z)$.
\par\medskip\noindent
Set $\bar{z}:=(z_1,z_2,\ldots, z_n)$.
If $p'(z)=n\prod_{j=1}^{n-1}(z-\zeta_j)$, then the statement of Conjecture~1 is equivalent to
$$
\max_{\bar{z}\in D^n}\max_{1\le i\le n} \min_{1\le j\le n-1}\vert z_i-\zeta_j\vert=1.
$$
In what follows, the zeros of $p$ will also be represented in the (already familiar) form  $\{(z_i,\mu_i)\}_{i=1}^m$   using for simplicity the same letters. Let 
$$\C^n(m,k):=\cup \left\{\C^n(\bar{\mu}',\bar{\nu}')~|~ m'=m,~k'=k\right\}.$$
\begin{remark}\label{remark1}
	Taking Theorem~\ref{theorem1} into account, it follows that, up to the ordering $\alpha=(1,2,\ldots,m)$ of the variables, there exist unique functions $\{z^\alpha_i\}_{i=1}^m$  and  $\{\xi^\alpha_j\}_{j=1}^k$ of $\{z_i\}_{i=1}^{k+1}$, analytic on $\C^n(m,k)$, which naturally coincide into new functions on $\C^n(m',k')$, $m'\le m$, $k'<k$, when some of the variables (zeros of $p$) and (or) some of the functions $\{\xi^\alpha_j\}_{j=1}^k$ (critical points of the second kind of $p$) overlap. Moreover, $\{z^\alpha_i\}_{i=1}^m$  and  $\{\xi^\alpha_j\}_{j=1}^k$ are continuous on its boundary 
	$$
	\partial \, \C^n(m,k)=\C^n(m-1,k-1)\cup \C^n(m,k-1),
	$$
	$2\le m\le n,~2\le k\le m-1$.
\end{remark} 
\par
\bigskip
\noindent {\it Proof of Conjecture 1.}~
Let $\bar{z}\in \C^n(m,k)$. Define the functions
\begin{equation}\label{SS}
	S(\bar{z}):=\max_{1\le i\le n} \min_{1\le j\le n-1}\vert z_i-\zeta_j\vert
\end{equation}
and
\begin{equation}\label{Sell}
	S_{\ell}:=\min_{1\le j \le k}\vert z_\ell-\xi_j\vert
\end{equation}
for all $1\le \ell \le m$ such that $\mu_\ell=1$. 
We will prove the following enhancement of the conjecture under consideration:
\par\medskip\noindent
\begin{theorem}\label{theorem2}  If $\bar{z}\in D^m$ and $m\ge 2$, then we have
	$$
	\min_{1\le j\le k} \vert z_\ell-\xi_j\vert\le 1,
	$$
	for any $z_\ell$, $\mu_\ell=1$, with equality if and only if $m=n$, $k=1$ and $\{z_i\}_{i=1}^n$ coincide with the roots of unity $ \{e^{\frac{2(i-1)\pi \i}{n}} \}_{i=1}^{n}$.
\end{theorem}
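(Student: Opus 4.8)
The plan is to derive Theorem~\ref{theorem2} from Theorem~\ref{theorem1} as an extremal problem on the compact configuration space, showing that the only way to make a simple zero far from every critical point of the second kind is to place the zeros at the roots of unity. First I would dispose of the non-simple zeros: if $\mu_\ell\ge 2$ then $z_\ell$ is itself a zero of $p'$, so Sendov's disk about $z_\ell$ already contains a critical point and nothing is to prove; only the simple zeros $z_\ell$ ($\mu_\ell=1$) are at issue, and for them the nearest zero of $p'$ is necessarily of the second kind. By Theorem~\ref{theorem1} and Remark~\ref{remark1} the functions $\xi_j$ are analytic in the interior of the configuration space and continuous up to its boundary, so
\[
S_\ell(\bar z)=\min_{1\le j\le k}|z_\ell-\xi_j|
\]
is continuous on the compact set of admissible $\bar z\in D^m$ and attains a maximum $M$; the whole theorem is the assertion $M\le1$ together with the identification of the maximizers.

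The computational backbone is a single identity. Since $\mu_\ell=1$, writing $p(z)=(z-z_\ell)\prod_{i\ne\ell}(z-z_i)^{\mu_i}$ gives $p'(z_\ell)=\prod_{i\ne\ell}(z_\ell-z_i)^{\mu_i}$, while condition (ii) gives $p'(z_\ell)=n\prod_{i\ne\ell}(z_\ell-z_i)^{\mu_i-1}\prod_{j}(z_\ell-\xi_j)^{\nu_j}$; cancelling the shared factors yields
\[
\prod_{i\ne\ell}(z_\ell-z_i)=n\prod_{j=1}^{k}(z_\ell-\xi_j)^{\nu_j}.
\]
I would use this at the very end: once the geometry of the maximizer is known, it turns the bound $S_\ell\le1$ into an equality exactly when all factors $|z_\ell-\xi_j|$ coincide and $\prod_{i\ne\ell}|z_\ell-z_i|=n$. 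I would resist, however, trying to bound $S_\ell$ through $\prod_j|z_\ell-\xi_j|^{\nu_j}$ alone, since that product can exceed $1$ while $S_\ell$ is still small; the location of the maximizer, not merely this product, is what must be pinned down.

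The heart of the argument, and the principal obstacle, is to locate the maximizer, for which I would run an induction on $m+k$ anchored by Remark~\ref{remark1}. If the maximizing configuration lies on $\partial\,\C^n(m,k)=\C^n(m-1,k-1)\cup\C^n(m,k-1)$, then by continuity its value is that of $S_\ell$ on a stratum of strictly smaller $m+k$, where the inductive hypothesis applies. Otherwise the maximizer is non-degenerate, and here I would invoke the full flexibility of Theorem~\ref{theorem1}, namely that the $\xi_j$ are analytic functions of \emph{any} admissible $(k+1)$-tuple of the $z_i$. Letting $\xi_{j^\ast}$ realize the defining minimum, on a slice in a single free variable the map to $z_\ell-\xi_{j^\ast}$ is analytic, so its modulus is subharmonic; where this minimizer is unique the strong maximum principle forbids an interior maximum unless $\xi_{j^\ast}$ is constant along the slice, and exploiting that every zero lying in the open disk may be taken as a free variable, I would drive all the distinct zeros onto the unit circle.

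The delicate point throughout the deformation is the \emph{minimum}: at the coincidence locus, where several $\xi_j$ tie for the nearest one, subharmonicity of the relevant slice is lost and the argmin may switch, so the plain maximum principle does not apply. I expect this to be the genuine difficulty, and the way through it is to treat a tie as additional rigidity — several analytic equalities holding simultaneously — rather than as an obstruction, and to keep the dependent zeros feasible by routing every degeneration into the boundary strata already covered by the induction. Once all zeros lie on $\partial D$ and the configuration is non-degenerate, the equality discussion of the displayed identity forces the factors $|z_\ell-\xi_j|$ to coincide and $\prod_{i\ne\ell}|z_\ell-z_i|$ to be maximal; this collapses $k$ to $1$, places the single critical point (of multiplicity $n-1$) at the common centroid of the zeros, and spaces the zeros equally, i.e.\ $p(z)=z^n-c$ with $|c|=1$ and $S_\ell=1$. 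By comparison with the combinatorics of ties, the terminal symmetric computation is routine.
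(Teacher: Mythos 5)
Your skeleton (compactness of the configuration space, induction through the boundary strata of Remark~\ref{remark1}, an interior deformation argument, then a terminal equality discussion) parallels the paper's overall strategy, but the two steps that carry all the weight are not actually supplied, and each fails as stated. First, the tie locus is not a side issue you may defer: at a maximizer of $S_\ell=\min_j|z_\ell-\xi_j|$ one expects, exactly as in any min--max problem, that several $\xi_j$ are simultaneously nearest to $z_\ell$, so the case you do handle (unique argmin plus maximum principle) is the non-generic one, and the case you wave at (``treat ties as rigidity'') is the heart of the matter. A minimum of moduli of analytic functions is genuinely not subharmonic and the maximum principle genuinely fails for it: $\min(|e^{z}|,|e^{-z}|)=e^{-|\Re z|}$ attains an interior maximum on every disk centred at $0$ without being constant, so no appeal to analyticity alone can ``route every degeneration into the boundary strata.'' Moreover, when you move a free zero, the $m-k-1$ dependent zeros of Lemma~\ref{lemma1} move too and may reach $\partial D$; your deformation then stalls at a configuration that is neither degenerate nor has all zeros on the circle, and your induction does not cover it. The paper sidesteps both problems at once by a different decomposition: it first reduces to $k=1$ (where no tie can occur, and where the single critical point has the explicit linear expression $\xi=\sum_i\tilde{\mu}_i z_i/n$), and only then performs the extremal analysis, via Kuhn--Tucker multipliers applied to that explicit formula; it also needs the Brown--Xiang result \cite{bx} for small $m$, a base case your induction silently lacks.

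Second, your terminal step is wrong. The identity $\prod_{i\ne\ell}(z_\ell-z_i)=n\prod_j(z_\ell-\xi_j)^{\nu_j}$ cannot ``force the factors $|z_\ell-\xi_j|$ to coincide,'' and the quantity $\prod_{i\ne\ell}|z_\ell-z_i|$ is not bounded by $n$ for points on the unit circle (take $z_\ell=1$ and two zeros near $-1$: the product is close to $4>3$), so the equality discussion you appeal to has no content. All the identity gives is $\min_j|z_\ell-\xi_j|\le\bigl(2^{m-1}/n\bigr)^{1/(m-1)}$, which exceeds $1$ for large $m$ --- the very trap you said you would avoid. Something substantive is still required once all zeros lie on $\partial D$, and the paper supplies it by invoking Rubinstein's theorem \cite{rub} for a zero lying on the unit circle, which yields both the bound $1$ and the characterization $p(z)=z^n-1$. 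Without a replacement for that ingredient, and without a genuine resolution of the tie and dependent-zero issues above, your argument does not close.
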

\par\medskip\noindent
{\it Proof.}~Let us show first that the proof can be reduced to the case $k=1$. Let $k\ge 2$, $\bar{z}\in C^n(m,k)$ and $\vert z_\ell\vert<1$. 
Denote by $\hbox{rad}_c(A)$ the radius of the circumself disk of a set of points $A$ (the smallest closed disk containing $A$). Let $\Phi$ be the map from Theorem~\ref{theorem1} and consider all sufficiently small local paths $\bar{z}(t),~t\in[0,t_0)$ such that $z_\ell(t)\in \hbox{int}(D),~t\in [0,t_0)$. For each such path we take the configuration $\Phi(\bar{z}(t))$ and consider the points
$\Phi(\bar{z}(t))/\hbox{rad}_c(t)\subset D^{m+k}$. All these points constitute a set
$V_1\times \ldots \times V_m\times W_1\times\ldots \times W_k\subset D^{m+k}$ such that $V_\ell$ and $W_1,\ldots ,W_k$ are open neighbourhoods of $z_\ell$ and $\{\xi_j\}_{j=1}^k$, respectively. This means that we can move the points  $z_\ell,\xi_1,\xi_2,\ldots,\xi_k$ so that to increase $S_\ell$ and $\bar{z}$ to remain in $\C^n(m,k)\cap D^{m}$. Therefore,  the maximum point of $S_\ell$ is located on $\partial(\C^n(m,k))\cap D^{m}$.  Now, all we need to do next is to consider the case $k=1$.
\par\medskip
\noindent
\textbf{\emph{The case $ \mathbf{{k=1}}$.}} Let $(\bar{\mu},\bar{\nu})$ be such that $k=1$ and  $\nu_1=m-1$, respectively. Let  $p(z)=\prod _{i=1}^m (z-z_i)^{\mu_i}$ and  $p'(z)=n(z-\xi_1)^{m-1}\prod _{i=1}^m (z-z_i)^{\mu_i-1}$.
Then, we have consecutively 
$$
\frac{p'(z)}{p(z)}=\frac{n(z-\xi_1)^{m-1}}{\prod _{i=1}^m (z-z_i)}=\sum_{i=1}^m\frac{\mu_i}{z-z_i},
$$
$$
(z-\xi_1)^{m-1}=\frac{1}{n}\sum_{i=1}^m\mu_i\omega_i(z),\quad \omega_i(z)=\prod_{\substack{j=1\\ j\neq i}}^m(z-z_j),
$$
$$
\xi:=\xi_1=\sum_{i=1}^m\frac{\mut_i}{n}z_i,\quad \mut_i:=\frac{n-\mu_i}{m-1},~i=1,2,\ldots,m.
$$
We have to prove that $\vert z_\ell -\xi\vert \le 1$ for all $z_\ell$ such that $\mu_\ell=1$.
Let 
$$
\vert z_{i_0} -\xi\vert = \max_{\mu_\ell=1} \vert z_\ell -\xi\vert,~\hbox{for some}~z_{i_0},~\vert z_{i_0} \vert <1.
$$ 
Without loss of generality, we will assume that $\hbox{rad}_c(\bar{z})=1$. Let also $m\ge 9$ (Conjecture 1 is proved for polynomials with $m\le 8$ \cite{bx}). 
Denote 
$$
\xi=:a+\i b,~ z_i=:a_i+\i b_i,~\xi-z_i=:c_ie^{\i\theta_i},~c_i>0,~\theta_i\in [0,2\pi),
$$
for $i=1,2,\ldots,m$, and $c:=\prod_{i=1}^mc_i$, $\theta:=\sum_{i=1}^m\theta_i$.
By means of Kuhn-Tucker necessary conditions, we will investigate the extremal points of the problem
\be\label{extrpr}
F_0\rightarrow \max,\quad f(z_1,z_2,\ldots ,z_m;z)\equiv 0,~a_i^2+b_i^2\le 1,~i=1,2,\ldots,m,
\ee
where
$$
F_0:=F_0(z_1,z_2,\ldots ,z_m):=(a-a_{i_0})^2+(b-b_{i_0})^2,
$$
and
$$
f:=f(z):=f(z_1,z_2,\ldots ,z_m;z):=(z-\xi)^{m-1}-\frac{1}{n}\sum_{i=1}^m\mu_i\omega_i(z).
$$
To this aim, set
$$
F:=F(z):=F_0(z_1,z_2,\ldots ,z_m)-
\lambda_1\Re f(z)-\lambda_2\Im f(z)-\sum_{i=1}^m\eta_i(a_i^2+b_i^2-1),
$$
where $\eta_i\ge 0$ and  $\eta_i(a_i^2+b_i^2-1)=0$, $i=1,2,\ldots,m$, at the extremal points. \par\noindent
The construction of the function $F$ needs some additional comments.
Formally, we should define it by using  $m$ different equality constraints, corresponding to $m$ different arbitrary points $z$ in an open neighbourhood of $\xi$. However, our calculations below implicitly use these constraints in taking the partial derivatives of $F$
with respect to $a,~b$ and $\{a_i,b_i\}_{i=1}^m$ and setting $z=\xi$. This is justified by the fact that the constraint functions (and so, the extremal points) are continuous functions of $z$. 
Next,  we  calculate the corresponding partial derivatives at $z=\xi$. First,
$$
\frac{\partial f}{\partial a_i}=-\frac{\mut_i}{n}(m-1)(z-\xi)^{m-2}+\frac{1}{n}\sum_{\substack{j=1\\ j\neq i}}^m\mu_j\omega_{ij}(z),~\omega_{ij}(z)=\frac{\omega_i(z)}{z-z_j},~i\neq j.
$$
From here,
$$
(z-z_i)\frac{\partial f}{\partial a_i}=-\frac{\mut_i}{n}(m-1)(z-\xi)^{m-2}(z-z_i)+(z-\xi)^{m-1}-\frac{\mu_i}{n}\omega_i(z)
$$
and
$$
(\xi-z_i)\frac{\partial f}{\partial a_i}(\xi)=-\frac{\mu_i}{n}\omega_i(\xi).
$$
Consequently,
\begin{eqnarray*}
	c_ie^{\i\theta_i}\frac{\partial f}{\partial a_i}(\xi)&=& -\frac{c\mu_i}{nc_i}e^{\i (\theta-\theta_i)},\qquad \frac{\partial f}{\partial a_i}(\xi)= -\frac{c\mu_i}{nc_i^2}e^{\i (\theta-2\theta_i)},\\
	c_ie^{\i\theta_i}\frac{\partial f}{\partial b_i}(\xi)&=& -\frac{c\mu_i}{nc_i}\i e^{\i (\theta-\theta_i)},\qquad \frac{\partial f}{\partial b_i}(\xi)= -\frac{c\mu_i}{nc_i^2}\i e^{\i (\theta-2\theta_i)},
\end{eqnarray*}
and
\begin{eqnarray*}
	\frac{\partial \Re f}{\partial a_i}(\xi)&=& -\frac{c\mu_i}{nc_i^2}\cos (\theta-2\theta_i),\qquad \frac{\partial \Re f}{\partial b_i}(\xi)= +\frac{c\mu_i}{nc_i^2}\sin (\theta-2\theta_i)\\
	\frac{\partial \Im f}{\partial a_i}(\xi)&=& -\frac{c\mu_i}{nc_i^2}\sin (\theta-2\theta_i),\qquad \frac{\partial \Im f}{\partial b_i}(\xi)= -\frac{c\mu_i}{nc_i^2}\cos (\theta-2\theta_i).
\end{eqnarray*}
For the partial derivatives of $F_0$ with respect to $a_i,b_i$, $i\neq i_0$, we have
\begin{eqnarray*}
	\frac{\partial F_0}{\partial a_i}&=&\frac{2\mut_i}{n}(a-a_{i_0})=\frac{2\mut_i}{n}c_{i_0}\cos(\theta_{i_0}),\\
	\frac{\partial F_0}{\partial b_i}&=&\frac{2\mut_i}{n}(b-b_{i_0})=\frac{2\mut_i}{n}c_{i_0}\sin(\theta_{i_0}),
\end{eqnarray*}
and, in addition,
\begin{eqnarray*}
	\frac{\partial F_0}{\partial a_{i_0}}&=&\frac{2\mut_{i_0}-2n}{n}c_{i_0}\cos(\theta_{i_0}),\\
	\frac{\partial F_0}{\partial b_{i_0}}&=&\frac{2\mut_{i_0}-2n}{n}c_{i_0}\sin(\theta_{i_0}).
\end{eqnarray*}
Now, we are ready to write Kuhn-Tucker necessary conditions  for a maximum of $F_0$  at the points $\{a_i+\i b_i\}_{i=1}^m$ under the conditions given in (\ref{extrpr}) for $z=\xi$ (for simplicity, we do not change the notation for the points of maximum). 
More precisely, the following must be fulfilled 
\begin{myequation3}\label{kt-eq1}
	\begin{aligned}
		\frac{\partial F}{\partial a_i}&=&\frac{2\mut_i}{n}c_{i_0}\cos(\theta_{i_0})+\frac{c\mu_i}{nc_i^2}[+\lambda_1\cos (\theta-2\theta_i)+\lambda_2\sin (\theta-2\theta_i)]-2\eta_ia_i=0,\quad i\neq i_0,\\
		\frac{\partial F}{\partial b_i}&=&\frac{2\mut_i}{n}c_{i_0}\sin(\theta_{i_0})+\frac{c\mu_i}{nc_i^2}[-\lambda_1\sin (\theta-2\theta_i)+\lambda_2\cos (\theta-2\theta_i)]-2\eta_ib_i=0,\quad i\neq i_0,\\
		\frac{\partial F}{\partial a_{i_0}}&=&\frac{2\mut_{i_0}-2n}{n}c_{i_0}\cos(\theta_{i_0})+\frac{c\mu_{i_0}}{nc_{i_0}^2}[+\lambda_1\cos (\theta-2\theta_{i_0})+\lambda_2\sin (\theta-2\theta_{i_0})]-2\eta_{i_0}a_{i_0}=0,\\
		\frac{\partial F}{\partial b_{i_0}}&=&\frac{2\mut_{i_0}-2n}{n}c_{i_0}\sin(\theta_{i_0})+\frac{c\mu_{i_0}}{nc_{i_0}^2}[-\lambda_1\sin (\theta-2\theta_{i_0})+\lambda_2\cos (\theta-2\theta_{i_0})]-2\eta_{i_0}b_{i_0}=0.
	\end{aligned}
\end{myequation3}
Using that $\lambda_1=\lambda\cos(\theta_\lambda)$ and $\lambda_2=\lambda\sin(\theta_\lambda)$ for some $\lambda \ge 0$ and $\theta_\lambda\in [0,2\pi)$, Equations (\ref{kt-eq1}) can be rewritten as follows
\begin{gather}\nonumber
	2\mut_i c_{i_0}\cos(\theta_{i_0})+\lambda\frac{c\mu_i}{c_i^2}\cos(\theta-2\theta_i-\theta_\lambda)-2n\eta_ia_i=0,\quad i\neq i_0,\\
	2\mut_i c_{i_0}\sin(\theta_{i_0})-\lambda\frac{c\mu_i}{c_i^2}\sin(\theta-2\theta_i-\theta_\lambda)-2n\eta_ib_i=0,\quad i\neq i_0,\nonumber\\
	(2\mut_{i_0}-2n) c_{i_0}\cos(\theta_{i_0})+\lambda\frac{c\mu_{i_0}}{c_{i_0}^2}\cos(\theta-2\theta_{i_0}-\theta_\lambda)-2n\eta_{i_0}a_{i_0}=0,\nonumber\\
	(2\mut_{i_0}-2n) c_{i_0}\sin(\theta_{i_0})-\lambda\frac{c\mu_{i_0}}{c_{i_0}^2}\sin(\theta-2\theta_{i_0}-\theta_\lambda)-2n\eta_{i_0}b_{i_0}=0.
	\nonumber\end{gather}
and
\begin{gather}\nonumber
	2\mut_i (\bar{\xi}-\bar{z}_{i_0})+\lambda\frac{c\mu_i}{c_i^2}e^{\i(\theta-2\theta_i-\theta_\lambda)}-2n\eta_i\bar{z}_i=0,\quad i\neq i_0,\\
	(2\mut_{i_0}-2n)(\bar{\xi}-\bar{z}_{i_0})+\lambda\frac{c\mu_{i_0}}{c_{i_0}^2}e^{\i(\theta-2\theta_{i_0}-\theta_\lambda)}-2n\eta_{i_0}\bar{z}_{i_0}=0,
	\nonumber\end{gather}
from where
\begin{gather}\label{kt-eq3}
	\lambda e^{-\i\theta_\lambda}\frac{\prod_{j=1}^m(\xi-z_j)}{(\xi-z_i)^2}+\frac{2\mut_i}{\mu_i}(\bar{\xi}-\bar{z}_{i_0})-2n\frac{\eta_i}{\mu_i}\bar{z}_i=0,\quad i\neq i_0,\\
	\lambda e^{-\i\theta_\lambda}\frac{\prod_{j=1}^m(\xi-z_j)}{(\xi-z_{i_0})^2}+\frac{2\mut_{i_0}-2n}{\mu_{i_0}}(\bar{\xi}-\bar{z}_{i_0})-2n\frac{\eta_{i_0}}{\mu_{i_0}}\bar{z}_{i_0}=0.
	\label{kt-eq4}\end{gather}
Equations (\ref{kt-eq3}) and (\ref{kt-eq4}) yield
\begin{equation}\label{kt-eq2}
	\begin{aligned}
		(\xi-z_i)^2\left[\frac{\mut_i}{\mu_i}(\bar{\xi}-\bar{z}_{i_0})-n\frac{\eta_i}{\mu_i}\bar{z}_i\right ]&=\\
		(\mut_{i_0}-n)(\xi-z_{i_0})^2(\bar{\xi}-\bar{z}_{i_0})&=-\frac{\lambda}{2} e^{-\i\theta_\lambda}\prod_{j=1}^m(\xi-z_j),~i\neq i_0,
	\end{aligned}
\end{equation}
since $\mu_{i_0}=1$ and $\eta_{i_0}=0$.
Remind that $\vert z_{i_0}\vert <1$ and 
$$
\mut_{i_0}=\frac{n-1}{m-1},~~\mut_i=\frac{n-\mu_i}{m-1},~i\neq i_0,
$$
$$
\mut_{i_0}-n=\frac{n-1-(m-1)n}{(m-1)}<0,~~\frac{\mut_{i}}{\mu_{i}}=\frac{n-\mu_i}{(m-1)\mu_i},~i\neq i_0.
$$
Let there exist another zero $z_i$ with $\vert z_{i}\vert < 1$ and $\eta_{i}=0$. Then, (\ref{kt-eq2}) implies for $m\ge 4$ 
$$
\frac{\vert \xi-z_{i_0}\vert^2}{\vert \xi-z_{i}\vert^2}=
\frac{n-\mu_i}{\mu_i}\cdot  \frac{1}{(m-1)n-(n-1)}<\frac{1}{2\mu_i}.
$$
If $\mu_i=1$, this is a contradiction to our assumption that
$\vert \xi-z_{i_0}\vert=\max_{\{i\,\vert\,\mu_i=1\}} $ $\vert \xi-z_{i}\vert$.
If $\mu_i>1$, then
$$
\vert \xi-z_{i_0}\vert<1.
$$
Accordingly, it is sufficient to consider the case $\vert z_i\vert=1$ and $\eta_i>0$, $i\neq i_0$. Let $z_i:=e^{\i \gamma_i}$,~$i=1,2,\ldots,m$, $i\neq i_0$. By using an appropriate rotation with  center 0,
we can and will suppose that $\Im (\xi-z_{i_0})=0$ and $\xi-z_{i_0}<0$. Take an arbitrary $1\le i\le m$, $i\neq i_0$. Then, it follows from (\ref{kt-eq2}) that 0 lies in the interior of the triangle with vertices 1, $\psi_1:=(\xi-z_i)^2=c_i^2e^{\i 2\theta_i}$ and $\psi_2:=(\xi-z_i)^2\bar{z}_i=c_i^2e^{\i (2\theta_i-\gamma_i)}$, and thus,
\be\label{conv1}
\Im\psi_1\Im\psi_2<0.
\ee
Further, if $l\,:\,\Im z=\alpha\Re z+\beta$ is the line passing through $\psi_1$ and $\psi_2$, and $\psi$ is the intersection point
of $l$ with the real axis, then
$$
\psi=-\frac{\beta}{\alpha}=\frac{\Im \psi_2\Re \psi_1-\Im \psi_1\Re \psi_2}{\Im \psi_2-\Im \psi_1}<0,
$$
or equivalently,
\be\label{conv2}
\Re \psi_1<\frac{\Im \psi_1}{\Im \psi_2}\Re \psi_2.
\ee
Clearly, inequalities (\ref{conv1}) and (\ref{conv2}) are equivalent to
\begin{equation}\label{conv}
	\begin{aligned}
		\sin(2\theta_i)&\sin(2\theta_i-\gamma_i)<0,\\
		\cos(2\theta_i)&<\frac{\sin(2\theta_i)}{\sin(2\theta_i-\gamma_i)}\cos(2\theta_i-\gamma_i).
	\end{aligned}
\end{equation}
In view of inequalities (\ref{conv}), we have either $\theta_i\in (\pi/2,\pi)$ and $\gamma_i\in (0,\pi)$ 
(if $\sin(2\theta_i)<0$ and $\sin(\gamma_i)>0$) or $\theta_i\in (\pi,3\pi/2)$ and $\gamma_i\in (\pi,2\pi)$  (if $\sin(2\theta_i)>0$ and $\sin(\gamma_i)<0$). In both cases the points $\xi$,    $z_i$ and $\xi-z_i$ lie in the same open half-plane with respect to the real axis, for any $i=1,2,\ldots,m,~i\neq i_0$. Since $\vert z_{i_0}\vert <1$, this implies that $\hbox{rad}_c(\bar{z})<1$, which is a contradiction. Consequently, the maximum of $\vert z_{i_0}-\xi \vert$ is attained when all the points $z_i,~i=1,2,\ldots, m,$ lie on the unit circle $\partial D$. Applying  \cite[Theorem 1]{rub} to $z_{i_0}$, it follows that $\xi=0$ and the maximum is equal to 1. Finally, using again \cite[Theorem 1]{rub}, we conclude that this can happen only if $m=n$, $k=1$ and $p(z)=z^n-1$. Theorem~\ref{theorem2} is proved.
\hfill $\Box$



\begin{thebibliography}{}

	\bibitem{bx}
	J. E. Brown and G. Xiang, On the Sendov's conjecture for polynomials of degree at most eight,  J. Math. Anal. Appl.,  252, pp.~272--292,  1999.
	
	
	\bibitem{kaup}
	L. Kaup and B. Kaup, Holomorphic functions of several variables, Walter de Gruyter, Berlin, 1983.
	
	
	\bibitem{rub}
	Z. Rubinstein, On a problem of Ilyeff, Pacific J. Math., 26, pp.~159--161, 1968.
	
	
	\bibitem{sen}
	Bl. Sendov, Hausdorff geometry of polynomials, East J. Approx., 7, pp.~123--178, 2001.
\end{thebibliography}
\end{document}